\numberwithin{equation}{section}
\numberwithin{equation}{subsection}
\newtheorem{thm}{Theorem}[section]
\newtheorem{corollary}[thm]{Corollary}
\newtheorem{lemma}[thm]{Lemma}
\newtheorem{definition}[thm]{Definition}
\newtheorem*{remark*}{Remark}
\newcommand{\ind}{{\mathrm{Ind}}}
\newcommand{\Hom}{{\mathrm{Hom}}}
\newcommand{\tra}{{\mathrm{tra}}}
\newcommand{\res}{{\mathrm{res}}}
\newcommand{\Ho}{{\mathrm{H}}}
\newcommand{\GL}{\mathrm{GL}}
\newcommand{\bZ}{{\mathbb Z}}
\newcommand{\bC}{{\mathbb C}}
\newcommand{\irr}{\mathrm{Irr}}
\newcommand{\wpsi}{\widetilde{\psi }}
\newcommand{\wg}{\widetilde{G}}
\newcommand{\wh}{\widetilde{H}}
\newcommand{\wrho}{\widetilde{\rho }}
\begin{document}
\title[On projective representations of finitely generated groups]{On  projective representations of finitely generated groups}
\author{Sumana Hatui}
\address{ SH: School of Mathematical Sciences, National Institute of Science Education and Research, An OCC of HBNI, Bhubaneswar 752050, Odisha, India.}
\email{sumanahatui@niser.ac.in}

\author{E. K. Narayanan}
\address{EKN: Department of Mathematics,
	Indian Institute of Science,
	Bangalore 560012, India. }
\email{naru@iisc.ac.in}

\author{Pooja Singla}
\address{ PS: Department of Mathematics and Statistics, Indian Institute of Technology Kanpur, Kanpur 208016, India. }
\email{psingla@iitk.ac.in}

\begin{abstract}
We prove a characterization of monomial projective representations of finitely generated nilpotent groups. We also characterize polycyclic groups whose projective representations are finite dimensional.
\end{abstract}
\subjclass[2010]{20C25, 20C15,20G05, 20F18}
\keywords{Projective representations, representation groups, Schur multiplier, monomial representations, finite weight}

\maketitle
\section{Introduction}
The study of projective representations has a long history starting with the pioneering work of Schur for finite groups \cite{IS4,IS7, IS11}. It involves understanding homomorphisms from a group into the projective general linear groups.
Let $G$ be a group and $V$ be a complex vector space. A projective representation of a group $G$ is a homomorphism $\rho$  from  $G$ to  the projective general linear group $\mathrm{PGL}(V).$ More precisely, $\rho$ is a map from $G$ to the general linear group $\mathrm{GL}(V)$ with $\rho(1) = \mathrm{Id}_V$ and
such that $$ \rho(xy) = \alpha(x, y) \rho(x) \rho(y)~~\forall~x, y \in G$$ where $\alpha : G \times G \to \mathbb C^\times$ is a $2$-cocycle. Such a $\rho$ is called an $\alpha$-representation. If $\alpha(x, y) =1$ for all $x, y \in G$ then $\rho$ is an ordinary representation. A key concept in the study of projective representations is the {{\it representation group}} to which these representations lift as ordinary representations. Schur~\cite{IS4} proved that for every finite group $G$ there exists a group $\widetilde{G}$, nowadays called Schur cover or representation group of $G$, such that the $\alpha$-representations of $G$ are obtained from the ordinary representations of $\widetilde{G}.$ The representation groups for the symmetric group $S_n$ for $n \geq 4$, were classified by Schur \cite{IS11}. See \cite[Section 3.3]{GK} for more examples of Schur covers for several finite groups. For a finitely generated group $G$, under the assumption that $\Ho_2 (G, \mathbb Z)$ is finitely generated, there is a finitely generated representation group $\widetilde{G}$ (see \cite[Chapter II, Proposition 3.2]{Beyl}).






We next define induction for the projective representations of a group. A definition of this appeared in \cite[Section 2.2]{Cheng}.  Below, we give a slight variant of this one that fits better in our  discussion.
\begin{definition}[Induced projective representation] \label{induced}
	Let $\alpha$ be a $2$-cocycle of $G$, $H$ be a subgroup of $G$ and $(\phi, W)$ be an $\alpha$-representation of $H$.  The induced projective representation $(\widetilde{\phi}, V)$ of $\phi,$ denoted by $\ind_{H}^{G}(\phi),$ is defined as follows:
\noindent
The space $V$ consists of the functions $f:G \to W$ such that
	
	\begin{enumerate}[label=(\roman*)]
		\item  $f(hg)=\alpha(h, g )^{-1}\phi(h)f(g)  ~for all~ h \in H, ~ g \in G$.
		\item The support of $f$ is contained in a union of finitely many right cosets of $H$ in $G$.
	\end{enumerate}
The projective representation $\widetilde{\phi}: G \to GL(V)$ is defined by $\widetilde{\phi}(g)f(x)=\alpha(x,g)f(xg)$ for all $x, g \in G$.
	
\end{definition}
It is easy to see that   $\widetilde{\phi}(g)f \in V$ and $\widetilde{\phi}$ is an $\alpha$-representation of $G$. For $\alpha = 1$, the above definition coincides with usual induction for discrete groups, see \cite[Definition~2.1]{SN}.
\\

Our first result is  a characterization of  the irreducible monomial projective representations of finitely generated nilpotent groups. Let $\alpha \in Z^2 (G, \mathbb C^\times)$ and let $\rho$ be an $\alpha$-representation of $G.$ Then $\rho$ is said to be {\it monomial} if there exists a subgroup $H \subset G$ and an $\alpha$-representation $\psi : H \to \mathbb C^\times$ such that $\rho$ is equivalent to $\ind_H^G(\psi)$. 
In the context of ordinary representations of finitely generated nilpotent groups, monomial representations are characterized by the finite weight property(see \cite{SN}, \cite{BG}). Motivated by this result, we define the following:

 \begin{definition}\label{fw-def}
An $\alpha$-representation $(\rho, V)$ of a group $G$ is said to have finite weight if there exists a subgroup $H \subset G$ and an $\alpha$-representation $\psi : H \to \mathbb C^\times$ such that the space $$V_H(\psi) =
 \{ v \in V:~\rho(h)v = \psi(h)v~\forall h \in H \}$$ is a non-trivial finite dimensional space.
 \end{definition}
The following result gives a complete characterization of monomial irreducible projective representations of a finitely generated nilpotent group.
 \begin{thm}\label{thm2}
 	An irreducible $\alpha$-representation $\rho$ of a finitely generated nilpotent group is monomial if and only if  it has finite weight.
 \end{thm}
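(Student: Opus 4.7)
The plan is to reduce Theorem~\ref{thm2} to the classical characterization of monomial irreducible ordinary representations of finitely generated nilpotent groups by the finite weight property (see \cite{BG}, \cite{SN}), applied to the representation group $\widetilde{G}$ of Section~\ref{covering}. First I would verify that this reduction is available: for a finitely generated nilpotent group $G$ the second homology $\Ho_2(G,\bZ)$ is finitely generated, so by the construction of Section~\ref{covering} the representation group fits into a central extension $1 \to A \to \widetilde{G} \to G \to 1$ with $A$ finitely generated. Hence $\widetilde{G}$ is finitely generated, and being a central extension of a nilpotent group by an abelian group, $\widetilde{G}$ is itself nilpotent.

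Next, I would fix an irreducible $\alpha$-representation $(\rho, V)$ of $G$ and lift it via Lemma~\ref{inf} to an ordinary irreducible representation $(\tilde{\rho}, V)$ of $\widetilde{G}$ whose restriction to the central subgroup $A$ is a scalar character $\chi$. Applying the classical theorem to $\tilde{\rho}$, monomiality of $\tilde{\rho}$ is equivalent to the existence of a subgroup $K \subseteq \widetilde{G}$ and a character $\theta : K \to \bC^\times$ for which the weight space
\[
V_K(\theta) = \{ v \in V : \tilde{\rho}(k)v = \theta(k)v \ \forall k \in K \}
\]
is nontrivial and finite-dimensional; moreover in that case $\tilde{\rho}$ is equivalent to $\ind_K^{\widetilde{G}}(\theta)$.

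The main bookkeeping step is to translate both conditions back to $G$. Starting from a pair $(K, \theta)$ as above, any $v \in V_K(\theta)$ automatically satisfies $\tilde{\rho}(a)v = \chi(a) v$ for $a \in A$, which forces $\theta$ to agree with $\chi$ on $K \cap A$. Hence $\theta$ extends canonically to $KA$ by $\theta(ka) = \theta(k)\chi(a)$, and the weight space $V_{KA}(\theta)$ coincides with $V_K(\theta)$. Since $KA$ contains $A$, it is the preimage of a subgroup $H \subseteq G$, and the extended character descends via Lemma~\ref{inf} to an $\alpha|_H$-representation $\psi : H \to \bC^\times$ with $V_H(\psi) = V_{KA}(\theta)$. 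An entirely analogous argument transports the induction formula $\tilde{\rho} \simeq \ind_{KA}^{\widetilde{G}}(\theta)$ into $\rho \simeq \ind_H^G(\psi)$ on the projective side, because the lift of Lemma~\ref{inf} is compatible with induction. Reading these correspondences in both directions then transports the classical equivalence for $\tilde{\rho}$ into the desired equivalence for $\rho$.

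The step I expect to occupy most of the work is the bookkeeping in the last paragraph: checking that the replacement $K \rightsquigarrow KA$ together with the extension of $\theta$ by $\chi$ is compatible with both the weight space description and the induction formula on each side of the correspondence from Lemma~\ref{inf}. Verifying the nilpotency and finite generation of $\widetilde{G}$, while conceptually routine, is also a prerequisite that should be made explicit before invoking \cite{BG} or \cite{SN}.
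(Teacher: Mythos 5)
Your proposal follows essentially the same route as the paper: one constructs a finitely generated nilpotent representation group $\widetilde{G}$ (Theorem~\ref{polycyclic}), lifts $\rho$ to an ordinary irreducible $\widetilde{\rho}$, invokes the classical finite-weight characterization for $\widetilde{G}$, and transfers both monomiality and the finite weight property back and forth (the paper isolates this bookkeeping as Theorems~\ref{monomial} and~\ref{finiteweight}). One small caution on the monomial direction: you cannot simply extend $\theta$ to $KA$ and assert $\widetilde{\rho}\simeq\ind_{KA}^{\widetilde{G}}(\theta)$ --- the correct step (Claim~1 in the paper's Theorem~\ref{monomial}) is to show via Frobenius reciprocity that irreducibility of $\widetilde{\rho}$ already forces $A\subseteq K$, since otherwise $\ind_{K}^{KA}(\theta)$ would be reducible.
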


For a proof of this result, see Section~\ref{representation}. To prove this result, we show that the characterization of monomial irreducible projective representations of $G$ can be obtained from the corresponding characterization for the ordinary representations of its representation group.

Our next result is a generalization of the well known result of Hall~\cite[Theorem 3.2, Theorem 3.3]{Ha} which states that a polycyclic group $G$ has all irreducible representations finite dimensional if and only if $G$ is abelian by finite.
We extend this result to projective representations.
For a $2$-cocycle $\alpha$ of $G$, we say $G$ is {\it $\alpha$-finite}  if every irreducible  $\alpha$-representation of $G$ is finite dimensional. We obtain the following characterization of $\alpha$-finite polycyclic groups.

\begin{thm}\label{thm3}
	Let $G$ be a polycyclic group and $\alpha$ be a $2$-cocycle of $G$. Then $G$ is $\alpha$-finite if and only if there is a normal abelian subgroup $N$ of $G$ such that $[\alpha|_{\scriptscriptstyle N \times N}]$ is of finite order and $G/N$ is a finite group.
\end{thm}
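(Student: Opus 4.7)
The plan is to prove the two directions of Theorem~\ref{thm3} separately: sufficiency by projective Clifford theory, and necessity by adapting the argument of Hall in \cite[Theorem~3.3]{Ha} to the $\alpha$-twisted setting.

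For sufficiency, fix a normal abelian subgroup $N\triangleleft G$ of finite index with $[\alpha|_{N\times N}]$ of finite order $m$. The alternating bicharacter $f(x,y):=\alpha(x,y)\alpha(y,x)^{-1}$ on $N$ takes values in $\mu_m$, so its radical $R=\{n\in N:f(n,\cdot)\equiv 1\}$ has finite index in $N$ (because $N/R$ embeds into the finite group $\Hom(N,\mu_m)$); on $R$ the cocycle $\alpha|_{R\times R}$ is symmetric, and hence a coboundary (as $\mathrm{Ext}(R,\bC^\times)=0$ by divisibility of $\bC^\times$). Consequently every irreducible $\alpha|_{N\times N}$-representation of $N$ is induced from a one-dimensional $\alpha|_{R\times R}$-representation of $R$, and has finite dimension $[N:R]^{1/2}$. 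Given any irreducible $\alpha$-representation $(\rho,V)$ of $G$ and an irreducible $\alpha|_N$-subrepresentation $\rho_1$ of $\rho|_N$, projective Frobenius reciprocity produces a surjection $\ind_N^G(\rho_1)\twoheadrightarrow\rho$, whose source has dimension $[G:N]\cdot\dim\rho_1<\infty$; hence $\rho$ is finite dimensional.

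For necessity, I argue the contrapositive, splitting into two cases. First, suppose $G$ itself is abelian-by-finite; fix any abelian normal subgroup $N_0\triangleleft G$ of finite index. By hypothesis $[\alpha|_{N_0\times N_0}]$ has infinite order, and the bicharacter analysis above applied in reverse shows that the radical of $\alpha|_{N_0}$ has infinite index in $N_0$, so $N_0$ admits an infinite dimensional irreducible $\alpha|_{N_0}$-representation $\psi$ (for example, via the quantum-torus style construction attached to a nondegenerate alternating form with a value of infinite order). The induced representation $\ind_{N_0}^G(\psi)$ is an $\alpha$-representation of $G$ of finite length (by projective Clifford theory applied to the finite index normal subgroup $N_0$); since every irreducible constituent of $\ind_{N_0}^G(\psi)$ contains $\psi$ as a subrepresentation of its restriction to $N_0$ (by Frobenius reciprocity), each such constituent is infinite dimensional and yields the desired irreducible $\alpha$-representation.

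In the remaining case, $G$ is not abelian-by-finite, and I would adapt Hall's construction directly. Hall produces, for polycyclic $G$ not abelian-by-finite, an abelian normal subgroup $A\triangleleft G$ and a character $\chi\in\widehat A$ whose $G$-orbit is infinite and whose stabilizer in $G$ equals $A$, so that $\ind_A^G\chi$ is an irreducible infinite dimensional ordinary representation of $G$. The $\alpha$-twisted analogue takes instead an irreducible $\alpha|_A$-representation $\chi$ of $A$ with the same orbit and stabilizer properties; the induced $\ind_A^G(\chi)$ is then an infinite dimensional irreducible $\alpha$-representation of $G$. The main obstacle is this final case: running Hall's delicate orbit-and-stabilizer analysis in the presence of a non-trivial twist $\alpha|_A$ on the chosen abelian subgroup, which requires revisiting each step of Hall's argument along a polycyclic series of $G$ and adjusting using the explicit description of $2$-cocycles from Section~\ref{covering}.
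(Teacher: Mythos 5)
Your sufficiency argument and your Case~1 of necessity are workable alternatives to the paper's (the paper proves sufficiency by intersecting the finitely many $G$-translates of a maximal $\bC^\alpha N$-submodule, and obtains the finite-order condition on $[\alpha_{\scriptscriptstyle N\times N}]$ from Lemma~\ref{abelian}, which it proves via the two-step nilpotent representation group $N^\star$ rather than your bicharacter/radical analysis). Two technical points in your write-up need repair, though. First, an irreducible $\bC^\alpha G$-module $V$ need not contain an irreducible $\bC^\alpha N$-submodule $\rho_1$ (finitely generated modules have maximal submodules, not simple ones), so you should pass to an irreducible quotient $V/W$ and use that coinduction from a finite-index subgroup agrees with induction. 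Second, your device for transferring an infinite-dimensional irreducible from a normal subgroup up to $G$ is Clifford theory for \emph{finite} index, but the inductive step over a polycyclic series requires transferring from normal subgroups of infinite index (e.g.\ $G_{n-1}$ with $G/G_{n-1}$ infinite cyclic). The paper's Lemma~\ref{normalfdim} does this for arbitrary normal subgroups by a maximal-ideal argument in the twisted group algebra; you will need that statement or an equivalent.

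The genuine gap is the case you yourself label ``the main obstacle'': when $G$ is not abelian-by-finite you do not actually carry out the twisted analogue of Hall's construction, and that is precisely the substantive content of the theorem beyond Hall. Moreover, the paper's resolution is not the one you propose (rerunning Hall's orbit--stabilizer analysis in the presence of a nontrivial twist on the abelian subgroup); it is to \emph{remove} the twist first. On the free abelian normal subgroup $B$ produced by Hall's argument, $[\alpha_{\scriptscriptstyle B\times B}]$ must have finite order (otherwise Lemmas~\ref{abelian} and~\ref{normalfdim} already finish the proof), hence the irreducible $\alpha|_{\scriptscriptstyle B\times B}$-representations of $B$ are monomial; the inducing character of one of them is used to replace $\alpha$ by a cohomologous $\alpha'$ that is identically $1$ on a finite-index characteristic subgroup $C$ of $B$. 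The infinite-dimensional irreducible is then built explicitly on $L'=\langle C,z\rangle$ as a shift action of $z$ on a line of pairwise inequivalent characters $\rho_h$ of a lift $\widetilde{C}$ (with $\lambda$ not a root of unity), and pushed to $G$ by Lemma~\ref{normalfdim}. Without this reduction to a trivial cocycle on $C$ --- or a fully worked substitute --- your proof of necessity is incomplete.
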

The proof of this result is included in Section  \ref{Fdim} and is built on generalizing the ideas of Hall for $\alpha = 1$ case.  We conclude by characterizing the finite dimensional irreducible $\alpha$-representations of discrete Heisenberg groups of rank one.

\section{Preliminaries}
In this section we recall some standard definitions and results regarding projective representations of discrete groups. We refer the reader to \cite{GK} for related results in the case of finite groups. Recall that the second cohomology group $\Ho^2 (G, \mathbb C^\times)$ is defined to be the abelian group $Z^2(G, \mathbb C^\times) / B^2 (G, \mathbb C^\times)$,  where
$Z^2 (G, \mathbb C^\times)$ is the set of all $2$-cocyles of $G$ which form an abelian group under the pointwise multiplication and   $B^2 (G, \mathbb C^\times)$is the collection of all 2-coboundaries on $G$.
For any $\alpha \in Z^2(G, \mathbb C^\times)$, its image in $\Ho^2(G, \mathbb C^\times)$ is denoted by $[\alpha ]$. Two $2$-cocycles $\alpha_1, \alpha_2 \in Z^2(G, \mathbb C^\times)$ are called cohomologous if $[\alpha_1] =  [\alpha_2]$. Let $V$ be a complex vector space. Recall, a projective representation of a group $G$ is a map $\rho: G \rightarrow \GL(V)$ such that
\[
\rho(x) \rho(y) = \alpha(x, y) \rho(xy), \,\, \text{for all}\,\,  x, y \in G,
\]
for suitable scalars $\alpha(x, y) \in \mathbb C^\times$. By the associativity of  $\GL(V)$, the map $(x,y) \mapsto \alpha(x, y)$ gives a 2-cocycle of $G$, that is, $\alpha$ satisfies the following:
$$ \alpha (x, y) \alpha(xy, z) = \alpha(x, yz) \alpha(y, z), \,\, \text{for all}\,\, x, y, z \in G.$$
In this case, we say $\rho$ is an $\alpha$-representation.  Two $\alpha$-representations $\rho_1: G \rightarrow \GL(V)$ and $\rho_2: G \rightarrow \GL(W)$ are called linearly equivalent if there is an invertible $T\in  \mathrm{Hom}( V, W)$ such that
\[
T \rho_1(g) T^{-1} = \rho_2(g), \,\, \text{for all}\,\,  g \in G.
\]
Recall that an $\alpha$-representation of $G$ for $\alpha(x, y) = 1$ for all $x,y \in G$ is called an ordinary representation of $G$. At times we shall  call this just as a representation of $G$, omitting the word ordinary, whenever our meaning is clear from the context.

Let $\irr(G)$ denote the set of all linearly inequivalent ordinary  irreducible representations of $G$ over $\bC$ and  $\irr^\alpha(G)$ denote the set of all linearly inequivalent irreducible $\alpha$-representations of $G$ over $\bC$. We remark that for $\alpha, \alpha' \in Z^2(G, \mathbb C^\times)$ such that $[\alpha] = [\alpha']$, the sets $\mathrm{Irr}^{\alpha'}(G) $ and $\mathrm{Irr}^{\alpha}(G) $  are in bijective correspondence and can be easily obtained from each other. Therefore to study irreducible projective representations of $G$, we will pick a representative $\alpha$ for each element of $\Ho^2(G, \mathbb C^\times)$ and study the corresponding $\alpha$-representations.

For a group $G$ and a $2$-cocycle $\alpha$ of $G$, the set $\mathbb C^\alpha G $, called the twisted group algebra of $G$ with $2$-cocycle $\alpha$, is a $\mathbb{C}$-algebra with its vector space basis given by the set $\{e_g \mid g \in G  \}$. The multiplication of basis elements of $\mathbb C^\alpha G $ is given by the following
\[
e_g e_h = \alpha(g, h ) e_{gh}, \,\, \text{for all}\,\, g,h \in G
\]
 and is extended linearly to the whole set.
Parallel to the ordinary representations of $G$, it is easy to see that $(\rho, V)$  is an $\alpha$-representation of $G$ if and only if $V$ is a $\mathbb C^\alpha G$-module where the action is via $\rho$.   The notion of twisted group algebra appears in \cite{GK} for finite groups and in \cite{passman1970radicals,passman1996semiprimitivity} for infinite groups.

We next recall the definition of transgression and inflation homomorphisms. For a central extension,
\[
1 \rightarrow A \rightarrow   \widetilde{G} \rightarrow \widetilde{G}/A \rightarrow 1,
\]
the Hochschild-Serre spectral sequence \cite{HS} for cohomology of groups yields the following exact sequence
\begin{equation}\label{centralequation}
\Hom(\widetilde{G}, \bC^\times) \xrightarrow{\res} \Hom( A,\bC^\times) \xrightarrow[]{\tra} \Ho^2(\widetilde{G}/A, \bC^\times)  \xrightarrow{\inf} \Ho^2(\widetilde{G}, \bC^\times),
\end{equation}
where  $\tra:\Hom( A,\bC^\times) \to \Ho^2(\widetilde{G}/A, \bC^\times) $ given by $f \mapsto \tra(f) = [\alpha]$, with
$$\alpha(\overline{x},\overline{y}) = f(\mu (\overline{x})\mu(\overline{y})\mu(\overline{xy})^{-1}), \,\, \text{for all}\,\,  \overline{x}, \overline{y} \in \widetilde{G}/A, $$
for a section $\mu: \widetilde{G}/A \rightarrow \widetilde{G}$, denotes the transgression homomorphism. The inflation homomorphism, $\inf : \Ho^2(\widetilde{G}/A, \bC^\times)   \to  \Ho^2(\widetilde{G}, \bC^\times) $ is given by $[\alpha] \mapsto \inf([\alpha]) = [\beta]$, where $\beta(x,y) = \alpha(xA,yA)$, for all $x,y \in \widetilde{G}$. \\
Throughout this article, while making a choice of a section map we will always choose one that maps identity to identity. We end this section with some required results regarding representation group of $G$.

\begin{definition}[Representation group of $G$]
	A group $\widetilde{G}$ is called a  \emph{representation group} of $G,$ if there is a central extension
	$$1 \rightarrow A \rightarrow   \widetilde{G} \rightarrow G \to 1$$ such that
	corresponding transgression map $$\tra: \Hom(A,\bC^\times) \to  \Ho^2(G,\bC^\times)$$ is an isomorphism.
\end{definition}

	\begin{lemma}\label{polycyclic}
		Let $G$ be a polycyclic group. Then $G$ has a representation group over $\bC$ which is finitely generated.  Furthermore if $G$ is finitely generated nilpotent, then $G$ has a representation group which is finitely generated nilpotent.
	\end{lemma}
	\begin{proof} For any group $G$, there exists a central extension $1 \to \Ho_2(G, \mathbb Z) \to \widetilde{G} \to G \to 1$ such that $\widetilde{G}$ is a representation group of $G$ (see \cite[Chapter II, Proposition 3.2]{Beyl}). In particular, if both $G$ and $\Ho_2(G, \bZ)$ are finitely generated then $\widetilde{G}$ is finitely generated. It is well known that  polycyclic groups are finitely presented. Let $G=F/R$ be a free presentation of polycyclic group $G$. By Hopf formula \cite{Hopf}, $\Ho_2(G, \bZ) \cong \frac{[F,F]\cap R}{[F,R]}$ is a subgroup of $R/[F,R]$ and $R/[F,R]$ is a finitely generated abelian group. Hence $\Ho_2(G, \bZ)$ is a finitely generated group. This proves the existence of a finitely generated representation group of a polycyclic group $G$. The result for nilpotent groups follows because every nilpotent group is a polycyclic group. 
	\end{proof}

In \cite{PS}, the authors described a representation group for finitely generated abelian groups as well as for discrete Heisenberg groups  and their $t$-variants. Then, by \cite[Corollary 3.3]{PS}, it follows that the sets $\mathrm{Irr}(\widetilde{G})$ and $\cup_{[\alpha]\in \Ho^2(G, \mathbb C^\times)} \mathrm{Irr}^\alpha(G)$ are in bijective correspondence.\\

\begin{lemma}
	\label{remark:injective-transgression}
Let $1 \rightarrow A \rightarrow   \widetilde{G} \rightarrow G \rightarrow 1$ be a central extension such that $\widetilde{G}$ is a representation group of $G$. Then
$A \subseteq [\widetilde{G}, \widetilde{G}]$, where $[\widetilde{G}, \widetilde{G}]$ denotes the commutator subgroup of $\widetilde{G}$.
\end{lemma}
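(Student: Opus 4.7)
The plan is to use the five-term exact sequence \eqref{centralequation} together with the injectivity of the transgression map. Since $\widetilde{G}$ is a representation group of $G$, the map $\tra : \Hom(A,\bC^\times) \to \Ho^2(G,\bC^\times)$ is an isomorphism, so in particular $\ker(\tra) = 0$. Exactness at $\Hom(A,\bC^\times)$ then forces $\Image(\res) = 0$, i.e., every character $\chi : \widetilde{G} \to \bC^\times$ satisfies $\chi|_A = 1$.

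Next I would translate this vanishing statement into a containment inside the commutator subgroup. Any character $\chi : \widetilde{G} \to \bC^\times$ factors through the abelianization $\widetilde{G}^{ab} = \widetilde{G}/[\widetilde{G},\widetilde{G}]$, and conversely every character of $\widetilde{G}^{ab}$ lifts to one of $\widetilde{G}$. Hence the condition $\chi|_A = 1$ for all $\chi \in \Hom(\widetilde{G},\bC^\times)$ is equivalent to saying that the image $\overline{A}$ of $A$ in $\widetilde{G}^{ab}$ lies in the kernel of every character of the abelian group $\widetilde{G}^{ab}$.

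The final step, which is the only place where something beyond diagram chasing is needed, is to argue that an element $\overline{a}$ of an abelian group $B$ that is annihilated by every homomorphism $B \to \bC^\times$ must be trivial. Given $\overline{a} \neq 1$ in $B$, define a nontrivial homomorphism on the cyclic subgroup $\langle \overline{a}\rangle$ by sending $\overline{a}$ to a primitive root of unity of the appropriate order (if $\overline{a}$ has finite order) or to any element of $\bC^\times$ of infinite order otherwise. Since $\bC^\times$ is divisible, hence an injective $\bZ$-module, this homomorphism extends to a character of $B$ that is nontrivial on $\overline{a}$. Applying this with $B = \widetilde{G}^{ab}$ and $\overline{a} \in \overline{A}$ gives a contradiction unless $\overline{A}$ is trivial, i.e., $A \subseteq [\widetilde{G},\widetilde{G}]$.

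I do not anticipate a genuine obstacle: the argument is a short diagram chase using the Hochschild--Serre five-term sequence combined with the divisibility (injectivity) of $\bC^\times$, which ensures characters separate points in any abelian group. The only point to take care of is to verify that the restriction map in \eqref{centralequation} is exactly the map $\chi \mapsto \chi|_A$ and that exactness at $\Hom(A,\bC^\times)$ gives the needed implication from injectivity of $\tra$.
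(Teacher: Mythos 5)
Your proposal is correct and follows essentially the same route as the paper: both deduce from the isomorphism $\tra$ and exactness of the five-term sequence that the restriction map $\Hom(\widetilde{G},\bC^\times)\to\Hom(A,\bC^\times)$ is trivial, and then conclude $A\subseteq[\widetilde{G},\widetilde{G}]$. The paper states this last step without comment, whereas you justify it via the divisibility of $\bC^\times$ (characters separating points of the abelianization), which is exactly the implicit argument.
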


\begin{proof}
By the definition of the representation group and the exactness of (\ref{centralequation}), we have $\text{res}: \Hom(\widetilde{G}, \bC^\times) \rightarrow \Hom( A,\bC^\times)$ is trivial. Hence, $A \subseteq [\widetilde{G}, \widetilde{G}]$.
	
	\end{proof}

\section{Monomial and finite weight projective representations}\label{representation}
In this section we explore the monomial projective representations of a finitely generated group $G$ and its relations to the monomial representations
of a representation group $\widetilde{G}.$ As a consequence we show that, for a finitely generated nilpotent group $G,$ an irreducible projective representation $\rho$ is monomial if and only if $\rho$ is of finite weight (see Definition \ref{fw-def}). We need the following results.

\begin{lemma}\label{diamond-lemma}

Let $H$ and $K$ be two subgroups of $G$ and $\chi: H \rightarrow
\mathbb C^{\times}$, $\delta: K \rightarrow \mathbb C^{\times}$ be
characters of $H$ and $K$ respectively such that,

\begin{enumerate}

\item $kHk^{-1} \subseteq H$ for all $k \in K$, i.e. $K$
normalizes $H$.

\item $\chi(khk^{-1}) = \chi(h)$ for all $h \in H$ and $k \in K$.

\item $\chi|_{H \cap K} = \delta|_{H \cap K}$

\end{enumerate}

Then $\chi \delta: HK \rightarrow \mathbb C^{\times}$ defined by
$\chi \delta(hk) = \chi(h) \delta(k)$ for all $h \in H$ and $k \in
K$ is a character of $HK$ such that $\chi \delta|_{H} = \chi$.

\end{lemma}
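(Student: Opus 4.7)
The plan is a direct verification that the proposed map $\chi\delta$ is a well-defined group homomorphism. I will proceed in four short steps: (i) check that $HK$ is a subgroup; (ii) check that $\chi\delta$ is well-defined on $HK$; (iii) check the homomorphism property; (iv) read off the restriction to $H$.

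\textbf{Step 1: $HK$ is a subgroup.} Condition (1) says $K$ normalizes $H$, so $HK = KH$ and the product of two elements of $HK$ stays in $HK$: given $h_1k_1, h_2k_2 \in HK$, write
\[
(h_1k_1)(h_2k_2) = h_1\bigl(k_1h_2k_1^{-1}\bigr)(k_1k_2),
\]
with $k_1h_2k_1^{-1}\in H$ by (1).

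\textbf{Step 2: Well-definedness.} Suppose $h_1k_1 = h_2k_2$ with $h_i\in H,k_i\in K$. Then $h_2^{-1}h_1 = k_2k_1^{-1} \in H\cap K$, and condition (3) gives
\[
\chi(h_2^{-1}h_1) = \delta(k_2k_1^{-1}),
\]
which rearranges to $\chi(h_1)\delta(k_1) = \chi(h_2)\delta(k_2)$. Hence the value $\chi(h)\delta(k)$ depends only on the product $hk$, and $\chi\delta$ is well-defined.

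\textbf{Step 3: Homomorphism property.} Using the formula from Step 1 together with condition (2),
\[
\chi\delta\bigl((h_1k_1)(h_2k_2)\bigr) = \chi\!\bigl(h_1\,k_1h_2k_1^{-1}\bigr)\,\delta(k_1k_2) = \chi(h_1)\chi(h_2)\,\delta(k_1)\delta(k_2),
\]
which equals $\chi\delta(h_1k_1)\,\chi\delta(h_2k_2)$, as required.

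\textbf{Step 4: Restriction to $H$.} For any $h\in H$, writing $h = h\cdot 1$ gives $\chi\delta(h) = \chi(h)\delta(1) = \chi(h)$, so $\chi\delta|_H = \chi$.

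The only step with any real content is Step 2, since the decomposition $hk$ of an element of $HK$ is generally non-unique; the compatibility condition (3) is exactly what is needed to rule out inconsistency there. Everything else is a routine consequence of (1) and (2).
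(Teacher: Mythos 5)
Your proof is correct and complete. The paper itself gives no argument for this lemma, only a citation to Lemma 2.9 of the reference on ordinary representations of nilpotent groups; your direct verification (well-definedness via condition (3), multiplicativity via conditions (1) and (2)) is exactly the standard argument that citation points to, and you correctly identify well-definedness as the only step where the hypotheses do real work.
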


\begin{proof}
See Lemma 2.9 in \cite{SN}.

\end{proof}

\begin{thm}\label{frobenius}[Frobenius reciprocity]
 The induction functor is left adjoint to the restriction functor, i.e.,
\[
\mathrm{Hom}_G(\ind_H^G(\rho), \delta) = \mathrm{Hom}_K(\rho, \mathrm{Res}^G_K(\delta)),
\]
where $\delta$ is any representation of $G,$ $\rho$ is a representation of a subgroup $H$ of $G$ and $\mathrm{Res}^G_K(\delta)$ is the restriction of
$\delta$ to $K.$

\end{thm}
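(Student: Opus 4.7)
The plan is to establish the adjunction by exhibiting explicit mutually inverse maps between the two $\mathrm{Hom}$ sets (reading the right-hand side as $\mathrm{Hom}_H(\rho,\mathrm{Res}^G_H(\delta))$, which the conventions of the paper force and which appears to be what is meant). After replacing $\alpha$ by a cohomologous normalized cocycle with $\alpha(1,g)=\alpha(g,1)=1$, write $(\rho,W)$ for the $\alpha$-representation of $H$, $(\delta,U)$ for the $\alpha$-representation of $G$, and $(\widetilde{\rho},V)=\ind_H^G(\rho)$ as in Definition~\ref{induced}. For each $w\in W$, introduce the vector $f_w\in V$ defined by $f_w(h)=\rho(h)w$ for $h\in H$ and $f_w(g)=0$ otherwise; a short check using $\rho(hh')=\alpha(h,h')^{-1}\rho(h)\rho(h')$ confirms that $f_w$ satisfies the transformation rule of Definition~\ref{induced}(i), while condition (ii) is immediate.

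Next, I would define $\Psi:\mathrm{Hom}_G(\widetilde{\rho},\delta)\to \mathrm{Hom}_H(\rho,\delta|_H)$ by $\Psi(T)(w)=T(f_w)$. The identity $f_{\rho(h)w}=\widetilde{\rho}(h)f_w$ is immediate from $\widetilde{\rho}(h)f_w(x)=\alpha(x,h)f_w(xh)$ together with the identity above for $\rho$; combined with the intertwining property of $T$, this yields $\Psi(T)(\rho(h)w)=\delta(h)\Psi(T)(w)$. In the reverse direction, given an $H$-equivariant $S:W\to U$, I would set
\[
\Phi(S)(f)=\sum_{Hg\in H\backslash G}\delta(g)^{-1}S(f(g)),
\]
a finite sum by Definition~\ref{induced}(ii). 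Independence of the choice of coset representative reduces to the identity $\delta(hg)^{-1}S(f(hg))=\delta(g)^{-1}S(f(g))$, which after unravelling uses $f(hg)=\alpha(h,g)^{-1}\rho(h)f(g)$, the $H$-equivariance of $S$, and $\delta(hg)=\alpha(h,g)^{-1}\delta(h)\delta(g)$; the two factors of $\alpha(h,g)$ cancel exactly. A reindexing $g\mapsto gg_0$ then gives $\Phi(S)\circ\widetilde{\rho}(g_0)=\delta(g_0)\circ\Phi(S)$.

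It remains to check $\Psi$ and $\Phi$ are mutually inverse. The direction $\Psi\Phi=\mathrm{id}$ is immediate, since $f_w$ is supported on the single coset $H\cdot 1$ and $\Phi(S)(f_w)=\delta(1)^{-1}S(w)=S(w)$. For $\Phi\Psi=\mathrm{id}$, I would decompose $f$ as the sum of its restrictions $f_g'$ to each coset $Hg$ in its support and prove $f_g'=\alpha(g,g^{-1})^{-1}\widetilde{\rho}(g^{-1})f_{f(g)}$; this uses the cocycle identity $\alpha(h,g)\alpha(hg,g^{-1})=\alpha(g,g^{-1})$ combined with normalization, so that the $h$-dependence drops out. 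Applying $T$ to the decomposition and invoking its $G$-equivariance then telescopes to $T(f)=\sum_{Hg}\delta(g)^{-1}T(f_{f(g)})=\Phi(\Psi(T))(f)$.

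The main obstacle is not conceptual but notational, namely keeping track of the cocycle factors in each formula; every key cancellation comes from a single application of the $2$-cocycle identity, together with the normalization $\alpha(1,g)=\alpha(g,1)=1$. As a conceptually cleaner alternative, one can lift the problem to a representation group $\widetilde{G}$ of $G$ (and to the preimage $\widetilde{H}$ of $H$ in $\widetilde{G}$) via the bijection of Lemma~\ref{inf}, and deduce the statement from the classical Frobenius reciprocity for ordinary representations; this bypasses the cocycle arithmetic at the cost of the setup in Section~\ref{covering}.
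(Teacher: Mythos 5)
Your proposal is correct, but it is a genuinely different route from the paper's: the paper gives no argument at all for this statement, simply citing \cite[Chapter~1]{V}, where the result is proved for ordinary representations of discrete groups. You instead supply a complete, self-contained verification, and you do so in the greater generality of $\alpha$-representations with the paper's Definition~\ref{induced} of (compactly supported) induction. Your reading of the statement is the right one: the $K$ in the displayed formula is a typo for $H$, and it is precisely the finite-support condition in Definition~\ref{induced}(ii) that makes induction the \emph{left} adjoint of restriction (full induction would be the right adjoint), which your map $\Phi(S)(f)=\sum_{Hg}\delta(g)^{-1}S(f(g))$ exploits to get a finite sum. All the cocycle bookkeeping checks out: the well-definedness of $\Phi$ uses exactly the cancellation of $\alpha(h,g)$ coming from $\delta(hg)^{-1}=\alpha(h,g)\,\delta(g)^{-1}\delta(h)^{-1}$ against $f(hg)=\alpha(h,g)^{-1}\rho(h)f(g)$; the identity $f'_g=\alpha(g,g^{-1})^{-1}\widetilde{\rho}(g^{-1})f_{f(g)}$ follows from the cocycle relation $\alpha(h,g)\alpha(hg,g^{-1})=\alpha(h,1)\alpha(g,g^{-1})$ together with normalization; and $\alpha(g,g^{-1})^{-1}\delta(g^{-1})=\delta(g)^{-1}$ closes the telescoping in $\Phi\Psi=\mathrm{id}$. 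What your approach buys is a proof that covers the twisted case directly, which is in the spirit of how the paper actually deploys induction elsewhere (e.g.\ in Theorem~\ref{monomial}); what the paper's citation buys is brevity, since the only invocation of Theorem~\ref{frobenius} in the paper (Claim~1 of Theorem~\ref{monomial}) is for ordinary representations, where the classical statement suffices. Your suggested alternative of transporting the problem to the representation group via Lemma~\ref{inf} would also work, but it is heavier than needed and requires matching the two notions of induction under the lift, so the direct computation is preferable.
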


\begin{proof}
See \cite[Chapter~1]{V}
\end{proof}

\begin{thm}\label{monomial-ordinary-thm}
Let $G$ be a finitely generated nilpotent group. An irreducible representation $\rho$ of $G$ is monomial if and only if $\rho$ is of finite weight.
\end{thm}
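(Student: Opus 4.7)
The plan is to prove both directions by induction on the Hirsch length of the finitely generated nilpotent group $G$. The base case $G$ finite is Taketa's classical theorem that every irreducible representation of a finite nilpotent group is monomial, together with the observation that finite-dimensional representations vacuously satisfy the finite weight property. For the inductive step, one exploits the non-trivial center $Z(G)$: by Dixmier's lemma (applicable since irreducible representations of finitely generated groups are at most countable dimensional), $Z(G)$ acts on $V$ by a central character.

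For the direction monomial implies finite weight, suppose $\rho \cong \ind_H^G(\psi)$. The distinguished vector $v_0$ supported on $H$ with $v_0(1)=1$ is a $\psi$-eigenvector of $H$, so $V_H(\psi) \ne 0$. Combining the induction condition $f(hg)=\psi(h)f(g)$ with the eigenvector condition $f(gh)=\psi(h)f(g)$ shows that any $f \in V_H(\psi)$ is non-zero on the double coset $HgH$ only when $\psi$ and $h \mapsto \psi(ghg^{-1})$ agree on $H \cap g^{-1}Hg$. The polycyclic structure of $G$ together with nilpotency then forces the set of such compatible double cosets in the support of $f$ to be finite, which gives finite-dimensionality of $V_H(\psi)$.

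For the converse, assume $V_H(\psi)$ is non-trivial and finite dimensional. Enlarge $(H,\psi)$ to $K = \{g \in N_G(H) : \psi(ghg^{-1}) = \psi(h) \text{ for all } h \in H\}$, the stabilizer of $\psi$ in the normalizer of $H$. Then $V_H(\psi)$ is $K$-invariant, and the $K$-action on it factors through the finitely generated nilpotent quotient $K/H$. By the Lie--Kolchin theorem applied to finite-dimensional representations of nilpotent groups, $V_H(\psi)$ contains a common $K$-eigenvector $v$, yielding an extension $\tilde\psi$ of $\psi$ to $K$. Frobenius reciprocity (Theorem \ref{frobenius}) then produces a non-zero $G$-intertwiner $\Phi : \ind_K^G(\tilde\psi) \to \rho$, which is surjective by the irreducibility of $\rho$.

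The main obstacle is showing that $\Phi$ is also injective, equivalently that $\ind_K^G(\tilde\psi)$ is irreducible. The plan is to iterate the enlargement procedure on any proper invariant subspace that obstructs injectivity, with termination ensured by the Noetherian property of subgroups of polycyclic groups; alternatively, one invokes Mackey's irreducibility criterion adapted to finitely generated nilpotent groups, as carried out in \cite{SN} and \cite{BG}. Establishing this termination is the technical heart of the classical Segal/Bernshtein--Gelfand argument.
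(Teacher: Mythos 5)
First, a point of orientation: the paper does not prove this statement at all --- its ``proof'' is the single line ``See the main results in \cite{BG} and \cite{SN}.'' The theorem is precisely the main theorem of those references (the resolution of Parshin's conjecture for finitely generated nilpotent groups), and the present paper only uses it as a black box. So you are attempting to reconstruct an argument the authors deliberately outsource, and your sketch has to be judged against the cited proofs rather than against anything in this paper.

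Judged on those terms, your outline has the right general shape (climb from a character of a subgroup by repeatedly passing to normalizers, which are strictly larger in a nilpotent group), but two steps are genuinely broken or missing. First, the appeal to Lie--Kolchin is wrong: a finitely generated nilpotent group acting on a finite-dimensional complex space need not have a common eigenvector --- the quaternion group $Q_8$ already acts irreducibly on $\mathbb{C}^2$ --- so you cannot extend $\psi$ to all of $K$ in one stroke. The arguments in \cite{BG} and \cite{SN} adjoin one element $x\in N_G(H)\setminus H$ at a time, use only that a single operator on a finite-dimensional complex space has an eigenvector in the case $\psi^x=\psi$ on $H\cap xHx^{-1}$, and must treat the case $\psi^x\neq\psi$ separately (there one shows the translates of $V_H(\psi)$ are linearly independent, which either contradicts finite-dimensionality or feeds the irreducibility of the induced module); this dichotomy, plus a termination argument, is exactly the content you have waved at. Second, in the direction ``monomial $\Rightarrow$ finite weight,'' your claim that the set of $(\psi,\psi)$-compatible double cosets carrying a vector of $V_H(\psi)$ is finite is asserted, not proved; it is not a formal consequence of ``polycyclic plus nilpotent'' and is precisely where the structure theory enters (one may also need to replace the inducing pair $(H,\psi)$ by a better one rather than prove finiteness for the given pair). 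Finally, the irreducibility of $\ind_K^G(\tilde\psi)$, i.e.\ the injectivity of your intertwiner $\Phi$, is again deferred to the references. As written, your text is a plan that correctly names the difficulties rather than a proof that resolves them.
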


\begin{proof}

See the main results in \cite{BG} and \cite{SN}.

\end{proof}

Let $G$ be a group and $\widetilde{G}$ be its representation group. Thus by Lemma~\ref{remark:injective-transgression}, we have a subgroup $A$ of $\widetilde{G},$
$A \subset [\widetilde{G}, \widetilde{G}] \cap Z(\widetilde{G})$ and a central extension $$1 \rightarrow A \rightarrow \widetilde{G} \xrightarrow{\pi} G \rightarrow 1$$ such that the map $\tra$ is an isomorphism. Fix a section $s: G \to \widetilde{G}$. Let $\alpha \in \Ho^2(G, \mathbb C^\times)$ and $\chi : A \to \mathbb C^\times$ be such that $$\alpha(x, y) = \chi(s(x)s(y) s(xy)^{-1}), \,\, \text{for all}\,\,  x, y \in G. $$
For any $\alpha$-representation $(\rho, V)$ of $G$,
define $\widetilde{\rho}: \widetilde{G} \rightarrow \text{GL}(V)$ by $\widetilde{\rho}(a_g s(g)) = \chi(a_g)\rho(g) $.  Then $(\widetilde{\rho}, V)$ is an ordinary representation of $\widetilde{G}$. It is well known that the map $\rho \mapsto \tilde{\rho}$ preserves irreducibility and gives an equivalence between categories of all $\alpha$-representations of $G$ and the category of all representations of $\widetilde{G}$ lying above $\chi$.

\begin{lemma}
\label{lem:suitable-H}
  Let $\widetilde{H}$ be a subgroup of $\widetilde{G}$ and $\widetilde{\psi}$ be a one dimensional ordinary representation of $\widetilde{H}$. If $\ind_{\widetilde{H}}^{\widetilde{G}} (\widetilde{\psi})$ is an ordinary irreducible representation of $\wg$, then $A \subseteq \widetilde{H}.$
\end{lemma}
\begin{proof} Suppose $A \nsubseteq \wh $. The character $\widetilde{\psi}$ restricted to $\widetilde{H} \cap A$ can be extended to a character $\delta$ of $A$ (since $A$ is abelian). By Lemma \ref{diamond-lemma} we obtain an extension of the character $\widetilde{\psi}$ to $\widetilde{\psi} \delta : \widetilde{H}A \to \mathbb C^\times.$ It follows from Frobenius reciprocity (Theorem \ref{frobenius}) that $$ \Hom_{\widetilde{H}A} \left ( \ind_{\widetilde{H}}^{\widetilde{H}A} (\widetilde{\psi}), \widetilde{\psi}\delta \right ) =
\Hom_{\widetilde{H}}\left ( \widetilde{\psi}, \widetilde{\psi }\delta|_{\widetilde{H}} \right ) \neq 0.$$ Hence $\ind_{\widetilde{H}}^{\widetilde{H}A} (\widetilde{\psi})$ is not irreducible. On the other hand $ \ind_{\widetilde{H}}^{\widetilde{G}} (\widetilde{\psi}) \cong \ind_{\widetilde{H}A}^{\widetilde{G}} \left ( \ind_{\widetilde{H}}^{\widetilde{H}A} (\widetilde{\psi}) \right )$ implies that $\ind_{\widetilde{H}}^{\widetilde{H}A} (\widetilde{\psi})$ is irreducible. This is a contradiction. Hence $A \subseteq \wh$. 
\end{proof}

\begin{proof}[\textbf{Proof of Theorem \ref{thm2}}]

	By Lemma~\ref{polycyclic}, a finitely generated nilpotent group $G$ has a representation group $\widetilde{G}$ which is also finitely generated nilpotent.
	Let $\rho : G \to GL(V)$ be an irreducible monomial $\alpha$-representation. Then there exists a subgroup $H \subset G$ and an $\alpha$-representation $\psi: H \to \mathbb C^\times$ such that $\rho = \ind_H^G(\psi).$  Let $\widetilde{H} = \pi^{-1}(H),$ where $\pi$ is the surjective homomorphism from $\widetilde{G}$ to $G = \widetilde{G}/ A$. Then $\widetilde{H}$ is a subgroup of $\widetilde{G}$ and every element $\widetilde{h} \in \widetilde{H}$ can be uniquely written as $\widetilde{h} = a_h s(h)$ for some $h \in H$ and $a_h \in A.$
Define $\widetilde{\psi}: \widetilde{H} \to \mathbb C^\times$ by $$\widetilde{\psi}(\widetilde{h}) = \widetilde{\psi} (a_h s(h)) = \chi(a_h) \psi(h).$$ The map $\widetilde{\psi}$ is a one dimensional representation of $\widetilde{H}.$ By definition of $\wrho$ and induced representation, we obtain that $(\widetilde{\rho}, V)$ and $\ind_{\widetilde{H}}^{\widetilde{G}}(\widetilde{\psi})$ are isomorphic as $\wg$ representations. By Frobenius reciprocity, 
\[
\Hom_{\wh}(\wpsi, \wrho|_{\wh}) =  \Hom_{\wg}(\wrho, \wrho). 
\]
Since $\wrho$ is irreducible, $V_{\wh}(\wpsi)$ is one dimensional. By definition of $\wrho$, $V_{\wh}(\wpsi) = V_{H}(\psi),$ hence $V_{H}(\psi)$ is finite dimensional. This implies that $\rho$ is a finite weight representation of $G$. Conversely, suppose $(\rho, V)$ is a finite weight irreducible $\alpha$-representation of $G$. Then there exists a subgroup $H$ and $\alpha$-representation $\psi: H \rightarrow \mathbb C^\times$ such that $V_{H}(\psi)$ is finite dimensional. Let $\widetilde{H} = \pi^{-1}(H)$ and $\widetilde{\psi}: \widetilde{H} \to \mathbb C^\times$ given by  $$ \widetilde{\psi}(\widetilde{h}) = \widetilde{\psi} (a_h s(h)) = \chi(a_h) \psi(h) $$
is a one dimensional character of $\wh$. We note that $(\wrho, V)$ is an irreducible representation of $\wh$ such that $V_{\wh}(\wpsi) = V_H(\psi).$ Therefore $\wrho$ is a finite weight representation. By Theorem~\ref{monomial-ordinary-thm}, $\wrho$ is a monomial representation. Let $\widetilde{\rho} \cong \ind_{\wh}^{\wg}(\wpsi)$. By Lemma~\ref{lem:suitable-H} $A \subseteq \wh$. Therefore $\rho \cong \ind_H^G(\psi)$ and hence a monomial representation.

\end{proof}

\noindent

\section{Criterion for finite dimensional projective representations}\label{Fdim}
Let $G$ be a polycyclic group. It is a well known result due to Hall that every irreducible representation of $G$ is finite dimensional if and only if $G$ is abelian by finite (see \cite[Theorem 3.2, Theorem 3.3]{Ha}). That is, such groups $G$ are characterized by the condition that there exists an abelian normal subgroup $N$ of $G$ such that $G/N$ is finite.
By closely following some of the arguments in \cite{Ha}, we extend this result to projective representations. For $\alpha \in Z^2 (G, \mathbb C^\times),$ we show that every irreducible $\alpha$-representation of $G$ is finite dimensional if and only if there exists an abelian normal subgroup $N$ of $G$ such that $G/N$ is finite and $[\alpha_{\scriptscriptstyle N \times N}]$ is of finite order.
Notice that the finite order condition is automatically satisfied if $[\alpha] = [1].$ As a consequence of this main result we show that {{\it every}} irreducible projective representation of a finitely generated polycyclic group $G$ is finite dimensional if and only if there is a normal abelian subgroup $N$ of $G$ such that $G/N$ is finite and $[\alpha|_{\scriptscriptstyle N \times N}]$ is of finite order for {{\it all}} $\alpha \in Z^2 (G, \mathbb C^\times).$

Recall that for $\alpha \in Z^2(G,\bC^\times)$, the group $G$ is $\alpha$-finite  if every irreducible $\alpha$-representation of $G$ is finite dimensional. We need the following results:

\begin{thm}\label{twostep-c}
	Let $N$ be a finitely generated abelian group. Then there exists a central extension $$ 1 \rightarrow Z \rightarrow N^\star \rightarrow N \rightarrow 1,$$ such that $Z = Z(N^\star) = [N^\star, N^\star]$ and $N^\star$ is a representation group of $N.$ Moreover, $N^\star$ is a finitely generated two step nilpotent group.
\end{thm}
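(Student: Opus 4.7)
The plan is to construct $N^\star$ explicitly by generators and relations, then verify each required property directly. Writing $N$ in its invariant-factor decomposition $N \cong \bZ^s \oplus \bigoplus_{i=s+1}^k \bZ/r_i\bZ$ with generators $a_1,\dots,a_k$, the Schur multiplier $\Ho_2(N,\bZ) \cong N \wedge N$ is known (via the K\"unneth formula) to be finitely generated abelian, splitting as $\bigoplus_{i<j} \bZ/d_{ij}\bZ$ with $d_{ij}=\gcd(r_i,r_j)$ (using the convention that $\gcd$ with $0$ means the corresponding summand is $\bZ$). Setting $Z := \Ho_2(N,\bZ)$ and applying the explicit construction from Section \ref{covering} with $G=N$, I obtain a central extension $1 \to Z \to N^\star \to N \to 1$ realized on the set $N \times Z$ with the twisted multiplication of that section. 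By the discussion there, the associated transgression is an isomorphism, so $N^\star$ is already a finitely generated representation group of $N$.

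It remains to verify $Z = [N^\star,N^\star] = Z(N^\star)$ and two-step nilpotence. The equality $Z = [N^\star,N^\star]$ follows from the containment $Z \subseteq [N^\star,N^\star]$ given by Lemma \ref{remark:injective-transgression}, together with the reverse containment $[N^\star,N^\star] \subseteq Z$ coming from the fact that $N^\star/Z \cong N$ is abelian. Two-step nilpotence is then immediate, since $[N^\star,N^\star] = Z$ is central. For $Z(N^\star) = Z$, the inclusion $Z \subseteq Z(N^\star)$ is automatic; for the reverse I would compute the commutator $[(n,0),(m,0)]$ directly in the twisted group law and identify its $Z$-component with the image of $n \wedge m$ under $N \wedge N \xrightarrow{\sim} Z$. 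An element $(n,z) \in N^\star$ is then central if and only if $n \wedge m = 0$ for every $m \in N$, which pins down $n$ in terms of $N \wedge N$ and reduces $(n,z)$ to an element of $Z$.

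The main technical step is the identification of the commutator bracket in the twisted group law with the exterior pairing $N \times N \to N \wedge N$. This amounts to unpacking the functions $t_i$ produced in Theorem \ref{thm:coycle-form} in the case $G = N$, observing how antisymmetry emerges modulo coboundaries, and matching the result with the standard computation of $\Ho^2$ for finitely generated abelian groups. Once this identification is in place, finite generation (coming from finite generation of both $N$ and $Z$), two-step nilpotence, and the remaining equalities all follow without additional work.
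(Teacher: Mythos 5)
Your construction is a genuinely different route from the paper's: the paper disposes of this theorem by citing \cite[Theorem 1.3]{PS} and its proof, whereas you build $N^\star$ from scratch by combining the invariant-factor decomposition, the identification $\Ho_2(N,\bZ)\cong N\wedge N\cong\bigoplus_{i<j}\bZ/\gcd(r_i,r_j)\bZ$, and the explicit twisted-multiplication construction of Section~\ref{covering}. That part is sound and arguably more useful than the citation: you correctly get a finitely generated representation group, the equality $Z=[N^\star,N^\star]$ (via Lemma~\ref{remark:injective-transgression} for one inclusion and commutativity of $N^\star/Z$ for the other), and hence two-step nilpotence. These are in fact the only properties the paper uses downstream (Lemma~\ref{abelian} needs $Z$ central and equal to $[N^\star,N^\star]$ so that Theorem~\ref{fd-two-step} applies), and you also flag honestly that identifying the antisymmetrization of the cocycle of Theorem~\ref{thm:coycle-form} with the exterior pairing is the technical step still to be written out.

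The gap is the final claim $Z(N^\star)=Z$. Your criterion that $(n,z)$ is central if and only if $n\wedge m=0$ for all $m\in N$ is correct (granting the commutator identification), but the conclusion that this ``reduces $(n,z)$ to an element of $Z$'' requires the alternating pairing $N\times N\to N\wedge N$ to have trivial radical, which fails in general. For $N$ nontrivial cyclic one has $N\wedge N=0$, so $N^\star=N$ and $Z(N^\star)=N\neq 0=Z$. Less degenerately, for $N=\bZ/2\oplus\bZ/4$ with generators $a,b$, the group $N\wedge N\cong\bZ/2$ is nonzero but $2b$ lies in the radical ($2b\wedge a=2(b\wedge a)=0$ and $2b\wedge b=0$), so $Z(N^\star)$ has order $4$ while $Z$ has order $2$; one can check that no representation group of this $N$ has center of order $2$, since an abelian group of order $8$ admits no nondegenerate alternating $\bZ/2$-valued form. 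So this equality cannot be rescued by a better argument --- it is a defect of the statement itself in these cases, inherited from the citation --- and your proof should either impose nondegeneracy of the pairing or, better, prove only the inclusions $Z\subseteq Z(N^\star)$ and $Z=[N^\star,N^\star]$, which is all that is needed elsewhere in the paper.
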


\begin{proof}	See \cite[Theorem 1.4]{PS} and its proof. The proof of \cite[Theorem 1.4]{PS} is done by showing that the corresponding transgression map is an isomorphism.
\end{proof}
\begin{thm}\label{fd-two-step}
	Let $G$ be a finitely generated two step nilpotent group and $\pi$ an irreducible representation of $G.$ Then, $\pi$ is finite dimensional if and only if the character obtained by restricting $\pi$ to $[G, G]$ is of finite order.
	
\end{thm}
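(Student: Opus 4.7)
The plan is to prove the two directions separately, with the nontrivial content concentrated in the converse.

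For the forward direction, suppose $\pi$ is finite dimensional of dimension $d$. Because $G$ is 2-step nilpotent, $[G,G]\subseteq Z(G)$, and Schur's lemma forces $\pi|_{[G,G]}=\chi(\cdot)\,\mathrm{Id}_V$ for a character $\chi$. For every commutator $[g,h]$, the operator $\pi([g,h])=\pi(g)\pi(h)\pi(g)^{-1}\pi(h)^{-1}$ has determinant one, while it also equals $\chi([g,h])^{d}\cdot\mathrm{Id}_V$; hence $\chi$ sends each commutator to a $d$-th root of unity. Since commutators generate $[G,G]$ and $\chi$ is multiplicative, $\chi$ takes values in the $d$-th roots of unity, so it has finite order. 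Note this half does not use that $G$ is finitely generated.

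For the converse, assume $\chi:=\pi|_{[G,G]}$ has finite order and let $N=\ker\chi$. Since $[G,G]$ is central in $G$, $N$ is central (in particular normal) in $G$, and $\pi|_N=\mathrm{Id}_V$; hence $\pi$ descends to an irreducible representation $\bar\pi$ of $\bar G := G/N$, which is again finitely generated 2-step nilpotent, now with finite commutator subgroup $[\bar G,\bar G]=[G,G]/N$. It therefore suffices to show that any irreducible representation of a finitely generated 2-step nilpotent group whose commutator subgroup is finite is finite dimensional.

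The crux — the step I expect to be the main obstacle — is to prove that under these hypotheses $\bar G/Z(\bar G)$ is finite, a partial converse to Schur's classical theorem in this restricted setting. Write $H:=[\bar G,\bar G]$ and $A:=\bar G/H$, a finitely generated abelian group. The 2-step nilpotency of $\bar G$ makes the commutator map bilinear (via the identities $[xx',y]=[x,y][x',y]$ and $[x,yy']=[x,y][x,y']$, which hold because all commutators are central) and makes it descend to cosets modulo $H$ (since $[H,\bar G]=1$), yielding a homomorphism $\phi\colon A\to\mathrm{Hom}(A,H)$, $\phi(\bar x)(\bar y)=[x,y]$, whose kernel is exactly $Z(\bar G)/H$. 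Because $A$ is finitely generated and $H$ is finite, $\mathrm{Hom}(A,H)$ is finite, so $\bar G/Z(\bar G)$ injects into a finite group and is therefore finite.

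Once $\bar G/Z(\bar G)$ is known to have some finite order $n$, the conclusion is standard: pick coset representatives $g_1,\dots,g_n$ of $Z(\bar G)$ in $\bar G$; since $\bar\pi|_{Z(\bar G)}$ is scalar, for any nonzero $v\in V$ the subspace $\mathrm{span}\{\bar\pi(g_i)v:1\le i\le n\}$ is $\bar G$-invariant, and by irreducibility $\dim V\le n$, completing the proof.
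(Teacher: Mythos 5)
Your proof is correct, but it is genuinely different from what the paper does: the paper offers no argument at all for this statement and simply cites \cite[Theorem 1.3]{SN}, whereas you give a complete, self-contained proof. Your forward direction (determinant of a commutator of operators is $1$, so the central character kills $d$-th powers) is the standard argument and is fine. Your converse is the interesting part: after quotienting by $\ker\chi$ you reduce to showing that a finitely generated two-step nilpotent group with finite commutator subgroup is centre-by-finite, via the bilinear commutator pairing $\phi\colon A\to\Hom(A,H)$ with $A=\bar G/H$ finitely generated and $H$ finite, and then you conclude with the classical bound $\dim V\le [\bar G:Z(\bar G)]$. This is a clean Schur-type converse and all the steps check out: bilinearity and descent of the commutator map use exactly the centrality of commutators, $\ker\phi=Z(\bar G)/H$, and $\Hom(A,H)$ is finite. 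What your route buys is independence from the external reference; what it costs is nothing of substance. The one point you should make explicit is that the scalarity of $\bar\pi$ on $Z(\bar G)$ (and of $\pi$ on $[G,G]$) in the possibly infinite-dimensional case is not plain Schur's lemma but Dixmier's lemma, which applies because $G$ is finitely generated, hence countable, so an irreducible module has countable dimension over $\bC$; the paper itself invokes Dixmier's lemma in the proof of Lemma \ref{inf}, so this is consistent with its framework.
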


\begin{proof}
	See \cite[Theorem 1.3]{SN}.
\end{proof}

We start with the following lemma:

\begin{lemma}\label{abelian}
	Let $N$ be a finitely generated abelian group and $\alpha \in Z^2(N,\bC^\times)$.
	Then $N$ is $\alpha$-finite if and only if $[\alpha]$ is of finite order.
\end{lemma}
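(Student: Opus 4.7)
The plan is to reduce the problem to ordinary representation theory of the representation group $N^\star$ supplied by Theorem~\ref{twostep-c}, and then apply Theorem~\ref{fd-two-step}. Concretely, since $N$ is a finitely generated abelian group, Theorem~\ref{twostep-c} gives a central extension
\[
1 \rightarrow Z \rightarrow N^\star \rightarrow N \rightarrow 1
\]
where $Z = Z(N^\star) = [N^\star, N^\star]$, the group $N^\star$ is a finitely generated two-step nilpotent representation group of $N$, and the transgression $\tra: \Hom(Z, \bC^\times) \to \Ho^2(N, \bC^\times)$ is an isomorphism. I would first fix the unique character $\chi \in \Hom(Z, \bC^\times)$ with $\tra(\chi) = [\alpha]$. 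Since $\tra$ is an isomorphism of abelian groups, the order of $\chi$ equals the order of $[\alpha]$; in particular, $[\alpha]$ has finite order if and only if $\chi$ has finite order.

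Next I would invoke Lemma~\ref{inf} to transfer the question to $N^\star$. The lifting construction in that lemma attaches to each $\rho \in \irr^\alpha(N)$ an irreducible ordinary representation $\widetilde{\rho}$ of $N^\star$ on the same vector space with $\widetilde{\rho}|_Z = \chi$ (by Dixmier's lemma the restriction to the central subgroup $Z$ is a scalar, and the construction forces that scalar to be $\chi$); conversely every irreducible representation of $N^\star$ whose central character restricts to $\chi$ on $Z$ arises this way. Hence the bijection of Lemma~\ref{inf} is dimension-preserving, and $N$ is $\alpha$-finite if and only if every irreducible $\widetilde{\rho} \in \irr(N^\star)$ with $\widetilde{\rho}|_Z = \chi$ is finite dimensional.

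Now the key input is Theorem~\ref{fd-two-step}: because $N^\star$ is finitely generated two-step nilpotent and $[N^\star, N^\star] = Z$, an irreducible $\widetilde{\rho}$ of $N^\star$ is finite dimensional precisely when $\widetilde{\rho}|_Z$ is a character of finite order. Combining this with the previous step: if $[\alpha]$ has finite order then $\chi$ has finite order, every admissible lift $\widetilde{\rho}$ is finite dimensional, and $N$ is $\alpha$-finite. Conversely, if $[\alpha]$ has infinite order then $\chi$ has infinite order; to conclude we need to exhibit at least one irreducible $\widetilde{\rho} \in \irr(N^\star)$ with $\widetilde{\rho}|_Z = \chi$. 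Such a $\widetilde{\rho}$ appears, for example, as any irreducible subrepresentation of $\ind_Z^{N^\star}(\chi)$, whose central character on $Z$ must equal $\chi$ by Frobenius reciprocity. By Theorem~\ref{fd-two-step} this $\widetilde{\rho}$ is infinite dimensional, so the corresponding irreducible $\alpha$-representation of $N$ is infinite dimensional, and $N$ fails to be $\alpha$-finite.

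The argument is essentially a bookkeeping exercise once the two theorems are in hand; no step looks substantially harder than the others. The only point that requires a little care is verifying that the bijection of Lemma~\ref{inf} matches up $\chi$-isotypic irreducibles of $N^\star$ with $\alpha$-representations of $N$ and preserves dimension, and that the order of $[\alpha]$ equals the order of $\chi$ under $\tra$; both are immediate from the constructions already in place.
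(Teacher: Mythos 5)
Your proposal is correct and follows essentially the same route as the paper: lift an irreducible $\alpha$-representation to the two-step nilpotent representation group $N^\star$ of Theorem~\ref{twostep-c}, identify its central character on $Z=[N^\star,N^\star]$ with the $\chi$ satisfying $\tra(\chi)=[\alpha]$ (so their orders agree), and conclude via Theorem~\ref{fd-two-step}. Your extra remark about producing at least one irreducible lift above $\chi$ in the infinite-order case is a detail the paper leaves implicit, but it does not change the argument.
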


\begin{proof}
	Let $\rho$ be an irreducible $\alpha$-representation of $N$ and $\widetilde{\rho}$ be its lift to $N^\star$ (see Theorem \ref{twostep-c}).
	Let $s:N \to N^\star$ be a section such that $s(1)=1$.
	By Theorem \ref{twostep-c}, we have $\chi:Z \to \mathbb C^\times$ such that $\tra({\chi}) = [\alpha].$
	For $n_1, n_2 \in N,$  $$\alpha(n_1, n_2) = \rho(n_1) \rho(n_2) \rho (n_1n_2)^{-1} = \widetilde{\rho}(s(n_1)s(n_2)s(n_1n_2)^{-1}).$$ It follows from the injectivity of the map $\tra$ that
	the character obtained by restricting the irreducible representation $\widetilde{\rho}$ to the center $Z = [N^\star, N^\star]$ equals $\chi.$ Since orders of $\chi$ and $[\alpha]$ are equal and $\rho$ is finite dimensional if and only if $\widetilde{\rho}$ is finite dimensional, the proof now follows from Theorem \ref{fd-two-step}.
\end{proof}



Let $N$ be a subgroup of $G$ and $\alpha$ be a $2$-cocycle of $G$. The restriction of cocycle $\alpha$ to the set $N \times N$ gives a $2$-cocycle of $N$ and this is denoted by either $\alpha|_{\scriptscriptstyle N \times N}$ or by $\alpha$ itself.
\begin{lemma}\label{normalfdim}
	Let $G$ be $\alpha$-finite and $N$ be a normal subgroup of $G$. Then $N$ is $\alpha|_{\scriptscriptstyle N \times N}$-finite.
\end{lemma}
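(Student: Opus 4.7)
The plan is to realize every irreducible $\alpha|_{N\times N}$-representation of $N$ as an $N$-subrepresentation of some irreducible $\alpha$-representation of $G$, which must be finite dimensional by hypothesis. The argument takes place in the twisted group algebra $\bC^\alpha G$ introduced immediately before the lemma.

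Let $(\sigma, W)$ be an irreducible $\alpha|_{N\times N}$-representation, i.e.\ a simple $\bC^{\alpha|_N}N$-module. Form the induced $\alpha$-representation $M = \ind_N^G(\sigma)$ of Definition~\ref{induced}; as a $\bC^\alpha G$-module this is the extension of scalars $\bC^\alpha G \otimes_{\bC^{\alpha|_N}N} W$, which is cyclic because the simple module $W$ is already cyclic over $\bC^{\alpha|_N}N$. The twisted group algebra $\bC^\alpha G$ is unital (the normalization $\alpha(1,g)=\alpha(g,1)=1$ is forced by the condition $\rho(1)=\mathrm{Id}$ in the definition of a projective representation), so Zorn's lemma, applied to the proper left ideals of $\bC^\alpha G$ containing the annihilator of a cyclic generator of $M$, yields a maximal proper submodule of $M$ and hence a simple quotient $M \twoheadrightarrow Q$. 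By the $\alpha$-finiteness of $G$, the simple $\bC^\alpha G$-module $Q$ has finite dimension.

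Frobenius reciprocity in the form of the tensor-hom adjunction gives
\[
\Hom_{\bC^\alpha G}(M, Q) \;\cong\; \Hom_{\bC^{\alpha|_N}N}(W, Q|_N),
\]
and the nonzero surjection $M \twoheadrightarrow Q$ on the left corresponds to a nonzero $N$-homomorphism $W \to Q|_N$ on the right. Simplicity of $W$ forces this map to be injective, so $W$ embeds into the finite dimensional space $Q|_N$; hence $\sigma$ is finite dimensional, as required. No step presents a genuine difficulty: the only points needing a routine verification are the identification of Definition~\ref{induced} with the algebraic induction $\bC^\alpha G \otimes_{\bC^{\alpha|_N}N}(-)$ and the adjunction isomorphism above, both of which are standard module-theoretic facts in the twisted group algebra setting.
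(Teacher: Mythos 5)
Your proof is correct and is essentially the paper's argument in module-theoretic rather than ideal-theoretic dress: the paper starts from a maximal left ideal $Q$ of $\bC^{\alpha}N$ with infinite-dimensional quotient, forms $P=\oplus_{s}sQ$ (the kernel presenting your induced module $\bC^\alpha G\otimes_{\bC^{\alpha}N}W$) and a maximal left ideal $\overline{P}\supseteq P$ (your maximal submodule/simple quotient $Q$), and then compares $\bC^\alpha G/\overline{P}$ with $\bC^{\alpha}N/Q$, which is exactly the comparison you carry out via Frobenius reciprocity. The only differences are cosmetic — you argue the positive statement directly where the paper argues the contrapositive, and your adjunction step cleanly yields the embedding $W\hookrightarrow Q|_{N}$ where the paper invokes the coset decomposition of $\bC^\alpha G$.
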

\begin{proof}

	Let  $Q$ be a maximal ideal of the twisted group algebra $\bC^\alpha N$.  Then there exists a maximal ideal $P$ of $\bC^\alpha G$ containing $Q$.
	Consider a natural non-zero morphism $\lambda: \bC^\alpha N/Q\to \bC^\alpha G/P$ of $\bC^\alpha N$-modules that sends the class of 1 to the class of 1. By condition of the lemma, the vector space $\bC^\alpha G/P$ is finite-dimensional. Next, maximality of $Q$ implies that $\lambda$ is injective. This shows finite-dimensionality of $\bC^\alpha N/Q$.

\end{proof}

The following lemma will be crucially used in the proof of Theorem \ref{thm3}.
\begin{lemma}
	\label{lem:existence-of-A}
	Let $G$ be a finitely generated polycyclic group with cyclic series
	\[
	1=G_0 \unlhd G_1 \unlhd G_2  \unlhd \cdots \unlhd G_n=G.
	\]
	Assume that $G_{n-1}$ has an abelian subgroup of finite index and $G_n$ does not have an abelian subgroup of finite index. Then there exists a free abelian subgroup A of finite rank, normal in G, and of finite index in $G_{n-1}$, satisfying the following:
	\begin{enumerate}
		\item $\mathrm{rank}(A) \geq 2$.
		\item  There exists $z \in G_n$ such that $L = \langle A, z \rangle $ has finite index in $G$ and $L/A$ is infinite.
		\item There exists a subgroup $B$ of $A$ such that $A = B \oplus \langle t \rangle$ for some $t \in A$ and $N_{L}(B) >A$, where  $N_{L}(B)$ is the the normalizer of $B$ in $L$.
	\end{enumerate}
	
\end{lemma}
\begin{proof}
	A  proof of the above result is included in the proof of \cite[Theorem~3.3 (p.~615)]{Ha}.
\end{proof}

Now we are in a position to prove Theorem \ref{thm3}. We provide necessary and sufficient conditions for a polycyclic group to be $\alpha$-finite.

\begin{proof}[\textbf{Proof of  Theorem \ref{thm3}:}]
	We first suppose that $N$ is a normal abelian subgroup of $G$ of finite index such that every irreducible $\alpha$-representation of $N$ is finite dimensional.  Let $V$ be an irreducible $\bC^\alpha G$-module. We will prove that $V$ is finite dimensional. Suppose not.
	Since $V$ is also $\bC^\alpha N$-module and $G$ is finitely generated, there exists a maximal $\bC^\alpha N$-submodule of $V$, say $W$. Then $V/W$ is an irreducible $\bC^\alpha N$-module and hence finite dimensional.
	
	Let $\{x_1=1, x_2,\ldots, x_t\}$ be a set of left coset representatives of $N$ in $G$. Now $N(x_i W) \subseteq x_i(N W) \subseteq  x_iW$ and $dim(V/x_iW)=dim(V/W)$.
	Consider $W_0=\cap_{i=1}^t x_iW.$ Then $V/W_0$ is a finite dimensional  $\bC^\alpha N$-module. If $x_j x_i N = x_k N$ and $w \in W_0,$ $x_j(x_iw)=x_knw \in W_0$. Hence $W_0$ is a $\bC^\alpha G$-submodule of $V$. Since $V$ is irreducible, either $W_0=V$ or $W_0=0$. Either case leads to
	finite dimensionality of $V.$

	Conversely,  Suppose $G$ is $\alpha$-finite. We prove that there is an abelian normal subgroup $N$ of $G$ such that $G/N$ is finite. Since $G$ is a finitely generated polycyclic group, $G$ has a series
	\[
	1=G_0 \unlhd G_1 \unlhd G_2  \unlhd \cdots \unlhd G_n=G
	\]
	such that $G_i/G_{i-1}$ is cyclic. We use induction on $n$. The group $G$ is cyclic for $n=1$. So the result is true in this case. Now assume $n >1$. We consider the following cases separately:
	\begin{enumerate}
		\item[(a)] $G_{n-1}$ does not have an abelian subgroup of finite index.
		\item[(b)] $G_{n-1}$ has an abelian subgroup of finite index.
	\end{enumerate}
	
	\noindent {\bf Case~(a):} First suppose that $G_{n-1}$ has no abelian subgroup of finite index. Hence by induction hypothesis $G_{n-1}$ has an infinite dimensional irreducible $\alpha$-representation. So by Lemma \ref{normalfdim}, $G$ has an infinite dimensional irreducible $\alpha$-representation and we are done.\\

	\noindent {\bf Case~(b):}  In this case, we prove the result by contradiction. Assume that $G$ does not have an abelian subgroup of finite index.
	Let $A$ be the normal, abelian subgroup of $G$ obtained by Lemma~\ref{lem:existence-of-A}. If $[\alpha|_{\scriptscriptstyle A \times A}]$ is not of finite order, then by Lemma \ref{abelian}, $A$ is not $\alpha$-finite. By Lemma \ref{normalfdim}, $G$ is not $\alpha$-finite, a contradiction. Hence $[\alpha|_{\scriptscriptstyle A \times A}]$ is of finite order. By Lemma \ref{abelian}, $A$ is $\alpha$-finite.
	By  Theorem \ref{twostep-c},  we have  a central extension
	$$1 \to Z \to A^\star \to A \to 1$$
	such that $A^\star$ is a a representation group  of $A$, which is a two-step nilpotent group such that $Z = [A^\star, A^\star]$. There is a character $\chi$  of $Z$ such that  $\tra(\chi)=[\alpha_{\scriptscriptstyle A \times A}]$. Since $A$ is $\alpha$-finite, irreducible ordinary representations of $A^\star$ lying above $\chi$ are finite dimensional and by \cite[Theorem 1.1]{SN}, they are monomial. Thus irreducible $\alpha|_{\scriptscriptstyle A \times A}$-representations of $A$ are finite dimensional and monomial.  So for $\rho \in \irr^\alpha(A)$, there exists a finite index subgroup $H$ of $A$ and a character $\psi: H \to \bC^\times$ such that
	$\alpha(h_1,h_2)=\psi(h_1)\psi(h_2)\psi(h_1 h_2)^{-1}$ and  $\rho = \ind_H^A(\psi).$ Define a map $\mu: G \to \bC^\times$ by
	\[
	\mu(g)=\begin{cases}
	\psi^{-1}(g) & \mathrm {for}\,\, g \in H,\\
	1 & \mathrm{for}\,\, g \notin H.
	\end{cases}
	\]
	Then take
	$\alpha'(g_1,g_2)=\alpha(g_1,g_2)\mu(g_1)\mu(g_2)\mu(g_1g_2)^{-1}$ for all $g_1, g_2 \in G$. The cocycles
	$\alpha'$ and $\alpha$ are cohomologous and $\alpha'|_{\scriptscriptstyle H \times H}=1$. Let $[A:H] = \ell$.
	Consider the subgroup $C$ of $A$ generated by $\ell$-th power of elements of $A.$ Clearly $C \subset H.$  By definition, $C$ is a characteristic subgroup of $A$ of finite index such that $\alpha'|_{\scriptscriptstyle C \times C}=1$.

	Let  $M=\langle C, z\rangle.$
	We show that  $M$ has an irreducible $\alpha'$-representation of infinite dimension.
	Since $M$ is a finitely generated polycyclic group, we have a  central extension
	\[
	1 \to J \to M^\star \xrightarrow{\pi} M \to 1
	\]   such that  $M^\star$ is a representation group of $M$, existence follows from Theorem \ref{polycyclic}. Then there is a character $\chi : J \to \bC^\times$  such that  $\tra(\chi)=[\alpha'|_{\scriptscriptstyle M \times M}]$.
	Let  $\widetilde{C}=\pi^{-1}(C)$. Consider the central extension $1 \to J \to \widetilde{C} \to C \to 1$.
	Every element  $\widetilde{c} \in \widetilde{C}$ can be written as $\widetilde{c}=s(c)j$, $c \in C, j \in J$, where $s$ is a section from $M$ to $M^\star$.
	
	Recall that $A =B \oplus \langle t \rangle$ such that $A/B$ is infinite cyclic. Since $A/C$ is of finite index, there is a smallest positive integer $k$ such that $t^k \in C$.
	Now let $\lambda \in \bC^\times$  which is not a root of unity. For each integer $h = 0,  \pm 1 , \pm 2,\ldots $, we define a function $\rho_h$ on $\widetilde{C}$ by the rule that,  for any $\widetilde{c} \in \widetilde{C}$,  $\rho_h(\widetilde{c})=\rho_h(s(c)j)=\chi(j)\lambda^{\beta}$, where  the integer $\beta=\beta_h(c)$ is defined by the condition that $z^hcz^{-h}=(t^k)^\beta (\,\,\mathrm{mod}\,\, B)$.
	Let $\widetilde{c}_i=j_i s(c_i), i=1,2$.
	Now we have $\widetilde{c}_1\widetilde{c}_2 = j_1j_2 s(c_1) s(c_2)s(c_1c_2)^{-1}s(c_1c_2).$ so,
	\begin{eqnarray*}
		\rho_h(\widetilde{c}_1\widetilde{c}_2)
		&=& \chi(j_1)\chi(j_2) \chi(s(c_1) s(c_2)s(c_1c_2)^{-1})\lambda^{\beta_1+\beta_2}\\
		&=& \chi(j_1)\chi(j_2)\alpha'(c_1, c_2)\lambda^{\beta_1+\beta_2}\\
		&=& \chi(j_1j_2)\lambda^{\beta_1+\beta_2}\\
		&=&   \rho_h(\widetilde{c}_1)\rho_h(\widetilde{c}_2).
	\end{eqnarray*}
	Hence $\rho_h$ are one dimensional ordinary representations of $\widetilde{C}$ such that $\rho_h|_J=\chi$.
	Observe that $\{\rho_h| h=0,\pm 1 , \pm 2, \cdots\}$ are not equivalent.

	Since $M/C \cong \langle z \rangle$, we have $M^\star/\widetilde{C} \cong \langle s(z) \rangle$.
	Now we define $V=\oplus_{-\infty}^{\infty} \bC[v_m]$, where $v_m$ is a generator of space $\rho_m$ for $m=0,  \pm 1 , \pm 2,\ldots $. Define
	\[
	s(z) v_m =v_{m+1},  \,\,  \widetilde{c}  v_m  =\rho_m(\widetilde{c})v_m.
	\]
	Then $V$ is an infinite dimensional ordinary representation of $M^\star$ lying above $\chi$. The representation $V$ is easily seen to be  irreducible.
	Thus $V$ is an infinite dimensional  irreducible $\alpha'$-representation of  $M$.  So  by Lemma \ref{normalfdim},  $G$ will have an infinite dimensional  irreducible $\alpha'$-representation. The cocycles $\alpha'$ and $\alpha$ are cohomologous, so $G$ will have an infinite dimensional irreducible $\alpha$-representation, which is a contradiction.
\end{proof}

\begin{corollary}\label{findim}
	Every irreducible projective representation of a  polycyclic group $G$ is finite dimensional if and only if there is a normal abelian subgroup $N$ of $G$ such that
	for any $\alpha\in Z^2(G, \bC^\times)$, $[\alpha|_{\scriptscriptstyle N \times N}]$ is of finite order and $G/N$ is a finite group.
\end{corollary}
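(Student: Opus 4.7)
The plan is to reduce Corollary \ref{findim} to Theorem \ref{thm3} together with Lemmas \ref{abelian} and \ref{normalfdim}. The key observation is that Theorem \ref{thm3} already characterizes $\alpha$-finiteness for each individual $\alpha \in Z^2(G, \bC^\times)$; the corollary simply asserts that a \emph{single} normal abelian subgroup $N$ can be chosen which works uniformly in $\alpha$.

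For the sufficiency direction, suppose there is a normal abelian subgroup $N$ of $G$ with $G/N$ finite and $[\alpha_{\scriptscriptstyle N \times N}]$ of finite order for every $\alpha \in Z^2(G, \bC^\times)$. Then for each such $\alpha$, applying Theorem \ref{thm3} to this fixed $N$ shows that $G$ is $\alpha$-finite. Hence every irreducible projective representation of $G$ is finite dimensional.

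For the necessity direction, assume $G$ is $\alpha$-finite for all $\alpha \in Z^2(G, \bC^\times)$. Specializing to the trivial cocycle $\alpha = 1$ (which recovers Hall's original theorem as a special case of Theorem \ref{thm3}), we obtain a normal abelian subgroup $N$ of $G$ such that $G/N$ is finite. The claim is that this single $N$ works uniformly in $\alpha$. Being a subgroup of a polycyclic group, $N$ is polycyclic, and since it is abelian it is therefore finitely generated. Now fix an arbitrary $\alpha \in Z^2(G, \bC^\times)$. Lemma \ref{normalfdim} applied to the normal subgroup $N$ guarantees that $N$ is $\alpha_{\scriptscriptstyle N \times N}$-finite, and Lemma \ref{abelian} then forces $[\alpha_{\scriptscriptstyle N \times N}]$ to be of finite order. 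This yields exactly the uniform cohomological condition in the statement.

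No substantive obstacle is expected here; the argument is essentially bookkeeping on top of the previously established results. The crucial point, which makes this a genuine corollary of Theorem \ref{thm3} rather than requiring fresh work, is that the single subgroup $N$ extracted from the $\alpha = 1$ case automatically satisfies the cohomological condition for every other cocycle via Lemmas \ref{normalfdim} and \ref{abelian}, so no delicate matching of subgroups across different $\alpha$ is needed.
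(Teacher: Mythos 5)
Your proposal is correct and follows essentially the same route as the paper: sufficiency by applying Theorem \ref{thm3} to the fixed $N$ for each $\alpha$, and necessity by extracting $N$ from one application of Theorem \ref{thm3} and then invoking Lemma \ref{normalfdim} followed by Lemma \ref{abelian} to get finite order of $[\alpha_{\scriptscriptstyle N \times N}]$ for every $\alpha$ (the paper phrases this last step as a contradiction, you phrase it directly). Your explicit remark that $N$ is finitely generated, as required by Lemma \ref{abelian}, is a small point the paper leaves implicit.
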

\begin{proof}
	Let $G$ be a polycyclic group.
	If  there is an abelian normal subgroup $N$ such that for any $\alpha \in Z^2(G, \bC^\times)$, $[\alpha|_{\scriptscriptstyle N \times N}]$ is of finite order and $G/N$ is  finite, then by Theorem \ref{thm3} every irreducible projective representation of $G$ is finite dimensional.
	
	Conversely, suppose every irreducible projective representation of $G$ is finite dimensional.
	By Theorem \ref{thm3}, it follows that, there is  an abelian  normal subgroup $N$ such that $G/N$ is  finite.
	If  for  some $\alpha \in Z^2(G,\bC^\times)$,  $[\alpha|_{\scriptscriptstyle N \times N}]$ is not of finite order,  then by Lemma \ref{abelian}  and   Lemma \ref{normalfdim}, there exists an infinite dimensional irreducible $\alpha$-representation of $G$, which is a contradiction.
\end{proof}

\subsection{Examples}
In this section, we discuss some examples of $\alpha$-finite groups.\\

\noindent {\bf Example~1:}  Consider the group $G=(\bZ/n\bZ \times \bZ) \rtimes \bZ$, where the multiplication is defined by $$(m_1,n_1,p_1)(m_2,n_2,p_2)=(m_1+m_2+p_1n_2(mod~ n),n_1+n_2, p_1+p_2).$$
By \cite[Lemma 2.2(ii)]{PS}, it follows that every $2$-cocycle $\alpha \in Z^2(G,\bC^\times)$, upto cohomologous,  is of the following form:
\[
\sigma((m_1,n_1,p_1), (m_2,n_2,p_2)) =
\lambda^{(m_2p_1+  \frac{n_2p_1(p_1-1)}{2} )}\mu^{(n_1m_2+p_1\frac{n_2(n_2-1)}{2} + p_1n_1n_2)},
\]
where $\lambda, \mu \in \bC^\times$ such that $\lambda^n=\mu^n=1$.
Hence every $2$-cocycle is of finite order and there is a normal subgroup $(\bZ/n\bZ \times n\bZ) \times \bZ$ of $G$ such that quotient group is finite. So by Corollary \ref{findim}, every projective representation of $G$ is finite dimensional.\\

\noindent{\bf Example~2:}
Our next example is of  generalized discrete Heisenberg groups. These are finitely generated  two-step nilpotent groups of rank $2n + 1$ with rank $1$ center.
Given an $n$-tuple $(d_1, d_2, \ldots, d_n)$ of positive integers with $d_1|d_2|\cdots |d_n$ we write
$$G=\mathbb{H}_{2n+1}(d_1, d_2, \ldots, d_n) =\{(a, b,c) | a \in \bZ, b, c \in \bZ^n\},$$
where the group operation is defined  by
\begin{eqnarray*}
	(a,b_1,b_2,\ldots ,b_n,c_1,\ldots ,c_n)(a',b'_1,b'_2,\ldots ,b'_n,c'_1,\ldots ,c'_n)\\
	= (a+a'+\sum_{i=1}^{n} d_ib'_ic_i,b_1+b'_1,b_2+b'_2,\ldots ,b_n+b'_n,c_1+c'_1,\ldots ,c_n+c'_n)
\end{eqnarray*}
Consider $H=\mathbb{H}_3(d_1)$ and $K=\mathbb{H}_{2n-1}(d_2, \ldots, d_n)$.
Then $G$ is a central product of normal subgroups $H$ and $K$ with $Z=[H,H]\cap [K,K]=d_2 \bZ$. Consider the set $$ H_3^{d_1}(d_2)= \{(m,n,p) \mid m \in \bZ/d_2\bZ , n,p \in \bZ\},$$ with the group operation defined by
$$(m_1,n_1,p_1)(m_2,n_2,p_2)=(m_1+m_2+d_1p_1n_2,n_1+n_2, p_1+p_2).$$ Then
\begin{eqnarray*}
	G/Z  \cong H_3^{d_1}(d_2)  \times  \bZ^{2n-2}.
\end{eqnarray*}

By \cite[Corollary 3.2]{PS}, there is a bijective correspondence between the projective representations of $G$ and those of  $G/Z$.
Since $G/Z$ has a normal abelian subgroup $N= \bZ/d_2\bZ \times d_1\bZ \times \bZ^{2n-1}$ such that the quotient $\frac{G/Z}{N} \cong \bZ/d_1\bZ$. By Corollary \ref{findim},  every irreducible $\alpha$-representation of $G$ is finite dimensional if and only if $[\alpha_{\scriptscriptstyle N \times N}]$ is of finite order.\\

\noindent
{{\bf Acknowldgement:}}
The first named author acknowledges NBHM grant (0204/52/2019/R$\&$D-II/333) and IISc Raman post doctoral fellowship (R(IA)/CVR-PDF/2020/2700) for their support. The second and third named authors are grateful to the support in the form of SERB MATRICS grant MTR/2018/000501 and  MTR/2018/000094 respectively.

\bibliographystyle{amsplain}
\bibliography{projective}

\providecommand{\bysame}{\leavevmode\hbox to3em{\hrulefill}\thinspace}
\providecommand{\MR}{\relax\ifhmode\unskip\space\fi MR }
\providecommand{\MRhref}[2]{%
  \href{http://www.ams.org/mathscinet-getitem?mr=#1}{#2}
}
\providecommand{\href}[2]{#2}
\begin{thebibliography}{10}

\bibitem{BG}
I.~V. Beloshapka and S.~O. Gorchinski\u{\i}, \emph{Irreducible representations
  of nilpotent finitely generated groups}, Mat. Sb. \textbf{207} (2016), no.~1,
  45--72.

\bibitem{Beyl}
F~Rudolf Beyl and J{\"u}rgen Tappe, \emph{Group extensions, representations,
  and the schur multiplicator}, vol. 958, Springer, 2006.

\bibitem{Cheng}
Chuangxun Cheng, \emph{A character theory for projective representations of
  finite groups}, Linear Algebra and its Applications \textbf{469} (2015),
  230--242.

\bibitem{Ha}
Philip Hall, \emph{On the finiteness of certain soluble groups}, Proceedings of
  the London Mathematical Society \textbf{3} (1959), no.~4, 595--622.

\bibitem{PS}
Sumana Hatui and Pooja Singla, \emph{On schur multiplier and projective
  representations of heisenberg groups}, Journal of Pure and Applied Algebra
  \textbf{225} (2021), no.~11, 106742.

\bibitem{HS}
Gerhard Hochschild and Jean-Pierre Serre, \emph{Cohomology of group
  extensions}, Transactions of the American Mathematical Society (1953),
  110--134.

\bibitem{Hopf}
Heinz Hopf, \emph{Fundamentalgruppe und zweite bettische gruppe}, Selecta Heinz
  Hopf, Springer, 1964, pp.~186--206.

\bibitem{GK}
Gregory Karpilovsky, \emph{Projective representations of finite groups}, New
  York-Basel (1985).

\bibitem{SN}
E.~K. Narayanan and Pooja Singla, \emph{On monomial representations of finitely
  generated nilpotent groups}, Communications in Algebra \textbf{46} (2018),
  no.~6, 2319--2331.

\bibitem{passman1970radicals}
Donald~S Passman, \emph{Radicals of twisted group rings}, Proceedings of the
  London Mathematical Society \textbf{3} (1970), no.~3, 409--437.

\bibitem{passman1996semiprimitivity}
DS~Passman, \emph{The semiprimitivity problem for twisted group algebras of
  locally finite groups}, Proceedings of the London Mathematical Society
  \textbf{3} (1996), no.~2, 323--357.

\bibitem{IS4}
J~Schur, \emph{{\"U}ber die darstellung der endlichen gruppen durch gebrochen
  lineare substitutionen.}, Journal f{\"u}r die reine und angewandte Mathematik
  \textbf{1904} (1904), no.~127, 20--50.

\bibitem{IS7}
\bysame, \emph{Untersuchungen {\"u}ber die darstellung der endlichen gruppen
  durch gebrochene lineare substitutionen.}, Journal f{\"u}r die reine und
  angewandte Mathematik \textbf{1907} (1907), no.~132, 85--137.

\bibitem{IS11}
\bysame, \emph{{\"U}ber die darstellung der symmetrischen und der
  alternierenden gruppe durch gebrochene lineare substitutionen.}, Journal
  f{\"u}r die reine und angewandte Mathematik \textbf{1911} (1911), no.~139,
  155--250.

\bibitem{V}
Marie-France Vign\'{e}ras, \emph{Repr\'{e}sentations {$l$}-modulaires d'un
  groupe r\'{e}ductif {$p$}-adique avec {$l\ne p$}}, Progress in Mathematics,
  vol. 137, Birkh\"{a}user Boston, Inc., Boston, MA, 1996.

\end{thebibliography}

\end{document}